\documentclass[a4paper,12pt,reqno]{amsart}
\usepackage{amsmath,amssymb}

\usepackage{graphicx}
\usepackage{amsthm}
\usepackage{thmtools}
\usepackage{comment}
\usepackage{hyperref}
\usepackage{float}
\usepackage{caption}
\usepackage{mathtools}
\usepackage{arcs}
\usepackage{yhmath}
\usepackage{tikz}
\usetikzlibrary{calc,intersections}
\makeatletter

\@namedef{subjclassname@2020}{%
\textup{2020} Mathematics Subject Classification} 
\makeatother

\newtheorem*{definition}{Definition}

\newtheorem{theorem}{Theorem}
\newtheorem{lemma}[theorem]{Lemma}

\title{On Ulam's Segment Motion Problem}

\author{Dragomir Grozev}
\address{Institute of Mathematics and Informatics, Bulgarian Academy of Sciences, Acad. G. Bonchev 8,
1113 Sofia, Bulgaria}
\email{drago.grozev@gmail.com}

\author{Nikolai Nikolov}
\address{Institute of Mathematics and Informatics, Bulgarian Academy of Sciences, Acad. G. Bonchev 8,
1113 Sofia, Bulgaria
\vspace{1mm}
\newline Faculty of Information Sciences, State University of Library Studies and Information
Technologies, Shipchenski prohod 69A, 1574 Sofia, Bulgaria}
\email{nik@math.bas.bg}

\thanks{The second named author was partially supported by the Bulgarian National Science Fund,
Ministry of Education and Science of Bulgaria under contract
KP-06-N82/6.} 
\subjclass[2020]{Primary 51M04; Secondary 51M15, 49Q10}

\begin{document}

\keywords {rigid motion, segment motion, Ulam's problem, extremal motion,
rotation, Euclidean geometry.}

\begin{abstract}

We study extremal rigid motions of a unit segment in $\mathbb{R}^d$, $d\ge 2$. Given two prescribed positions of a unit segment, we consider continuous motions transforming the initial position into the final one and investigate the total length of the trajectories traced by its endpoints. This minimization problem was posed by Ulam~\cite{Ulam1960} and solved by Gurevich~\cite{Gurevich1977} and Dubovitskii~\cite{Dubovitskii1976}.

Two natural lower bounds are given by the sum of the endpoint displacements and by the angle between the initial and final directions of the segment. We characterize all pairs of segment positions for which either of these lower bounds is attained.

In arbitrary dimension, we obtain complete characterizations of the equality cases for both the endpoint-displacement bound and the angular bound.
\end{abstract}

\maketitle

\vspace{-8mm}

\section{Introduction}

We consider continuous rigid motions in \(\mathbb{R}^d\), \(d\ge 2\); see, for example, \cite{Bo_Roth_1990}. Given two unit segments \(A_0B_0\) and \(A_1B_1\) in \(\mathbb{R}^d\), we study continuous rigid motions that transform \(A_0B_0\) into \(A_1B_1\) by mapping \(A_0\) to \(A_1\) and \(B_0\) to \(B_1\). We are particularly interested in motions that minimize the sum of the lengths of the trajectories of the endpoints. This problem was posed by Ulam~\cite{Ulam1960} and solved by Gurevich~\cite{Gurevich1977} and Dubovitskii~\cite{Dubovitskii1976}, who showed that an optimal motion exists and can be represented as a finite sequence of slidings and rotations.

\begin{definition}
Let \(X(t)\), \(t\in[0,1]\), denote the trajectory of a point \(X\) under a given rigid motion, where \(X(t)\in\mathbb{R}^d\). The length of this trajectory is denoted by \(L(X)\). In particular, if \(A(t)\) and \(B(t)\) are the trajectories of the endpoints of a unit segment, we write
\[
u(t):=B(t)-A(t).
\]
Since the motion is rigid, \(u(t)\) is a unit vector for every \(t\in[0,1]\). The sum of the lengths of the trajectories of the endpoints \(A_0\) and \(B_0\) under a rigid motion mapping the segment \(A_0B_0\) to \(A_1B_1\) is denoted by \(L(A_0B_0,A_1B_1)\), namely,
\[
L(A_0B_0,A_1B_1):=L(A_0)+L(B_0).
\]
\end{definition}

We restrict attention to motions satisfying \(L(A_0B_0,A_1B_1)<\infty\). Any such motion admits a reparametrization for which both trajectories \(A(t)\) and \(B(t)\) become Lipschitz curves, with a Lipschitz constant depending only on \(L(A_0B_0,A_1B_1)\); see, e.g.~\cite{Dubovitskii1976}. Accordingly, throughout the paper we assume that
\(A(t)\), \(B(t)\), and hence \(u(t)\), are Lipschitz. 

We also reserve the letter $\theta$ to denote the signed angle between $\overrightarrow{A_0B_0}$ and $\overrightarrow{A_1B_1}$, chosen in the interval $(-\pi,\pi]$.

The main problem is to determine the minimum possible value of $L(A_0B_0, A_1B_1)$ among all continuous rigid motions connecting the prescribed segment positions. Clearly, 
\begin{equation}
\label{eq:proposition_1}
L(A_0B_0, A_1B_1)\ge |A_0A_1|+|B_0B_1|.
\end{equation}
The following theorem characterizes the cases in which equality holds in~\eqref{eq:proposition_1}.
\begin{theorem}
\label{thm:iff_sliding}
Let $A_0B_0$ and $A_1B_1$ be two unit segments in $\mathbb{R}^d$. A necessary and sufficient condition for the existence of a continuous motion satisfying
$$L(A_0B_0, A_1B_1)= |A_0A_1|+|B_0B_1|$$
is
\begin{equation}
\label{eq:prop1_eq2}
\left(\overrightarrow{A_0A_1}\cdot \overrightarrow{A_0B_0}\right)
\left(\overrightarrow{A_0A_1}\cdot \overrightarrow{A_1B_1}\right)\ge 0
\end{equation}
and
\begin{equation}
\label{eq:prop1_eq3}
\left(\overrightarrow{B_0B_1}\cdot \overrightarrow{A_0B_0}\right)
\left(\overrightarrow{B_0B_1}\cdot \overrightarrow{A_1B_1}\right)\ge 0.
\end{equation}

\end{theorem}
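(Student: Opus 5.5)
The plan is to reduce the problem to the geometry of a planar conic. A rigid motion attains $L(A_0B_0,A_1B_1)=|A_0A_1|+|B_0B_1|$ if and only if each endpoint travels monotonically along a straight segment. That is, up to reparametrization,
\[
A(t)=A_0+a(t)\,v,\qquad B(t)=B_0+b(t)\,w,\qquad v:=\overrightarrow{A_0A_1},\ w:=\overrightarrow{B_0B_1},
\]
with $a,b$ continuous and nondecreasing from $0$ to $1$. The rigidity constraint $|B(t)-A(t)|=1$ becomes $(a(t),b(t))\in\Gamma$, where
\[
\Gamma:=\{(s,r)\in\mathbb{R}^2:\ q(s,r)=1\},\qquad q(s,r):=|B_0-A_0+r\,w-s\,v|^2 .
\]
Hence equality holds if and only if there is a coordinatewise nondecreasing continuous path in $\Gamma$ from $(0,0)$ to $(1,1)$. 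The function $q$ is a convex quadratic, being a squared norm of an affine map, so $K:=\{q\le 1\}$ is a closed convex set and $\Gamma=\partial K$ whenever $\Gamma$ is nondegenerate. At a point of $\Gamma$ with $u=B-A$ we have
\[
\nabla q=2\bigl(-u\cdot v,\ u\cdot w\bigr).
\]
Thus at the endpoints the outward normals are $N_0=(-u_0\cdot v,\,u_0\cdot w)$ and $N_1=(-u_1\cdot v,\,u_1\cdot w)$, where $u_0=\overrightarrow{A_0B_0}$ and $u_1=\overrightarrow{A_1B_1}$. Conditions \eqref{eq:prop1_eq2} and \eqref{eq:prop1_eq3} say exactly that $N_0$ and $N_1$ lie in a common closed coordinate quadrant.

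First I dispose of the degenerate cases. If $v=0$ or $w=0$, one endpoint is fixed, and the problem reduces to the other endpoint moving on a unit sphere along a chord. This forces $u_0=u_1$ or a trivial configuration, which can be checked directly against the conditions. If $v\parallel w$ with both nonzero, then $q$ depends only on one linear combination of $s$ and $r$, and $\Gamma$ is a union of at most two parallel lines. Here the claim follows by inspecting the slope of these lines and which line contains the two points. Otherwise $q$ is positive definite, $K$ is a compact ellipse, $\Gamma$ is a smooth closed curve, and $\nabla q\neq0$ on $\Gamma$.

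\textbf{Necessity.} Along a nondecreasing path $(a,b)$ in $\Gamma$, differentiating $q=1$ gives $a'(u\cdot v)=b'(u\cdot w)$ with $a',b'\ge0$. So the tangent lies in the closed first or third quadrant, and the normal lies in the closed second or fourth quadrant. The closed second and fourth quadrants meet only at the origin. Since the unit normal varies continuously along the smooth path, it must stay in one closed quadrant, say $\{x\le0,\ y\ge0\}$. Then $u(t)\cdot v\ge0$ and $u(t)\cdot w\ge0$ for all $t$, and in particular at $t=0$ and $t=1$, which gives \eqref{eq:prop1_eq2} and \eqref{eq:prop1_eq3}. The same applies with all signs reversed.

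\textbf{Sufficiency.} Now assume $N_0$ and $N_1$ lie in the same closed quadrant $Q$; I must produce a monotone arc. Because the direction of the outward normal of an ellipse is a strictly monotone function of arc position, the set of points of $\Gamma$ with normal in $Q$ is a single boundary arc. On this arc the normal turns through at most a right angle, so the tangent stays in a closed quadrant. If $Q$ is the second or fourth quadrant, this arc is a graph of a monotone relation and is traversed monotonically in both coordinates. It contains $(0,0)$ and $(1,1)$, and since $(1,1)$ dominates $(0,0)$, traversing it in the appropriate direction gives the required nondecreasing path. The remaining task is to rule out, or handle, $Q$ being the first or third quadrant. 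Here I would use that $(0,0)$ and $(1,1)$ both lie on $\Gamma$, together with convexity of $K$, to show that the supporting normals at two boundary points where one dominates the other cannot both point into the first or third quadrant unless they also lie on a common axis.

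I expect this last step to be the main obstacle: the careful handling of the boundary cases where some inner products vanish and the normals sit on the coordinate axes, and of the parallel-lines degenerate case. The necessity direction is essentially the connectivity argument above.
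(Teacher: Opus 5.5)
Your proposal is correct and is essentially the paper's argument in different coordinates: your ellipse $\Gamma$ in the $(s,r)$-plane is an affine image of the paper's circle $k$, and your four arcs (on each of which the outward normal $\nabla q=2(-u\cdot v,\,u\cdot w)$ stays in one closed quadrant) are exactly the paper's four arcs cut out by $\ell'_A$ and $\ell'_B$; the gradient formula makes explicit the link to \eqref{eq:prop1_eq2}--\eqref{eq:prop1_eq3}, which the paper only asserts. The step you flag as the main obstacle follows in one line from the convexity you mention: since $q(0,0)=q(1,1)=1$ and $q$ is convex, $N_0\cdot(1,1)=u_0\cdot(w-v)=-\tfrac12|w-v|^2\le 0$ and $N_1\cdot(1,1)=u_1\cdot(w-v)=\tfrac12|w-v|^2\ge 0$, so the nonzero $N_0$ cannot lie in the closed first quadrant and $N_1$ cannot lie in the closed third, hence any common closed quadrant is the second or fourth.
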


\smallskip
Another natural lower bound arises from the angle between the directions of the segments. If $\theta$ denotes the signed angle between the directions of $A_0B_0$ and $A_1B_1$, $0\le |\theta|\le \pi$, then 
\[
L(A_0B_0,A_1B_1)\ge |\theta|.
\]

In dimension three and higher, it is natural to ask to what extent an extremal angular motion must be planar. The following estimate provides a quantitative answer by giving a lower bound for the minimum trajectory length in terms of the angle between the segment directions and the distance between the lines containing the segments.

In particular, it shows that any deviation from planarity forces the minimum trajectory length to exceed the angular lower bound.

\begin{theorem}
    \label{thm:theorem_2}
Let $A_0B_0$ and $A_1B_1$ be unit segments in $\mathbb{R}^d$, $d\ge 2$,
and let $h\ge 0$ be the distance between the lines containing them. 
Assume that the angle between their directions is $\theta, 0<|\theta|\le \pi$. Then
\[
L(A_0B_0,A_1B_1)
\ge
\sqrt{
\theta^2+ 4
h^2\cos^2\frac{\theta}{2}
}.
\]
\end{theorem}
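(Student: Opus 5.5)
The plan is to pass to the midpoint and the direction. Write $M(t)=\tfrac12(A(t)+B(t))$, so that $A=M-\tfrac12u$ and $B=M+\tfrac12u$ with $u(t)$ a Lipschitz unit vector. Then
\[
L(A_0B_0,A_1B_1)=\int_0^1\Bigl(\bigl|M'-\tfrac12u'\bigr|+\bigl|M'+\tfrac12u'\bigr|\Bigr)\,dt .
\]

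\emph{Step 1: a pointwise inequality.} For vectors $p,q$, split $p=p_\parallel+p_\perp$ into the parts parallel and orthogonal to $q$. Then
\[
|p\pm q|\ge\sqrt{|p_\perp|^2+\bigl(|q|\pm s\bigr)^2},\qquad s:=\pm|p_\parallel|,
\]
and Minkowski's inequality in $\mathbb R^2$ gives $|p+q|+|p-q|\ge 2\sqrt{|p_\perp|^2+|q|^2}$. With $p=M'$ and $q=\tfrac12u'$ this yields
\[
|A'|+|B'|\ge\sqrt{|u'|^2+4|P_tM'|^2},
\]
where $P_t$ is the orthogonal projection onto $(u')^\perp$. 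More flexibly, $|P_tM'|\ge |M'\cdot e|$ for any unit $e\perp u'$.

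\emph{Step 2: integrate.} Applying the integral Minkowski inequality $\int|(a,b)|\ge|(\int a,\int b)|$ gives
\[
L\ge\sqrt{\Bigl(\int|u'|\Bigr)^2+4\Bigl(\int|P_tM'|\Bigr)^2}.
\]
Here $\int|u'|\ge|\theta|$, since $u$ is a curve on the unit sphere joining $u_0$ to $u_1$. It therefore remains to show
\[
\int_0^1|P_tM'|\,dt\ge h\cos\frac\theta2 .
\]

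\emph{Step 3: the hard step.} Proving this last bound is the main obstacle, because $u'$ may itself have a component along the common normal of the two lines, and then $P_t$ kills part of the transversal motion of $M$. Let $n$ be a unit common normal, so $n\perp u_0,u_1$ and $h=|n\cdot(X_1-X_0)|$ for any $X_0$ on the first line and $X_1$ on the second. In $d=2$, either $h=0$ or $\theta=\pi$, and both are easy. The first thing I would try is the following. Write $u=\cos\varphi\,v+\sin\varphi\,n$ with $v$ in the span of $u_0,u_1$ (and a remainder). Then choose a time-dependent unit vector $e(t)\perp u'$ adapted to this tilt, for instance $e=-\sin\varphi\,v+\cos\varphi\,n$ corrected to be orthogonal to $u'$.

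I would then bound $\int M'\cdot e$ from below by $h$ minus the cost of tilting. If that loss cannot be absorbed directly, I would instead keep the full two-component quantity $\int|(|u'|,2M'\cdot e)|$ and solve the resulting planar optimal-control problem. In that problem the tilt $\varphi$ trades angular length against transversal length, and the worst case should occur when the segment passes through the symmetric position with direction the bisector of $u_0,u_1$. This is exactly where the factor $\cos\frac\theta2$ should appear, as the projection of the bisector-perpendicular displacement. So the concrete sub-goal is an explicit calibration: a function $F(M,u)$ with $|\nabla F|$ bounded by the integrand above and with $F(\text{end})-F(\text{start})\ge\sqrt{\theta^2+4h^2\cos^2\frac\theta2}$. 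I would look for $F$ of the form $\sqrt{\alpha(u)^2+4\beta(M,u)^2}$, with $\alpha$ the spherical distance to $u_0$ and $\beta=n\cdot M$ times a cosine factor of the half-angle.
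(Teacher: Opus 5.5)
There is a genuine gap, and it originates in Step~1 rather than Step~3. Your Minkowski step cancels $s$: it discards the component of $M'$ along $u'$ entirely, and that component alone can carry the segment across the gap between the lines. Consequently the sub-goal $\int_0^1|P_tM'|\,dt\ge h\cos\frac{\theta}{2}$ is false. Worse, even the full Step-2 quantity $\sqrt{(\int|u'|)^2+4(\int|P_tM'|)^2}$ does not dominate the target.

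Here is a counterexample. Take $u_0=e_1$, $u_1=\cos\theta\,e_1+\sin\theta\,e_2$ with $0<\theta<\pi$, $n=e_3$, and $M(0)=0$.
On $[0,1]$ let $u$ run along the great circle from $e_1$ to $e_3$, and on $[1,2]$ along the great circle from $e_3$ to $u_1$, both at angular speed $\pi/2$. Throughout, let $M'=\pm c\,u'/|u'|$, with the sign chosen so that $M'\cdot e_3\ge 0$.
Each leg raises $M\cdot e_3$ by $2c/\pi$. With $c=\pi h/4$, the final segment has direction $u_1$ and lies in the plane $x\cdot e_3=h$, so the two lines are at distance $h$ and at angle $\theta$.
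Along this motion $P_tM'\equiv 0$ and $\int|u'|=\pi$. So your Step-2 bound equals $\pi$ for every $h$, whereas $\sqrt{\theta^2+4h^2\cos^2\frac{\theta}{2}}\to\infty$ as $h\to\infty$. The true length is $L=\pi\max\{h,1\}$, since $|A'|+|B'|=2\max\{c,\pi/4\}$ at every time; so the theorem is not threatened, only your lower bound.

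The same example rules out the calibration you propose in Step~3. Any $F$ whose rate of change along motions is bounded by $\sqrt{|u'|^2+4|P_tM'|^2}$, or by $\sqrt{|u'|^2+4(M'\cdot e)^2}$ with $e\perp u'$, changes by at most $\pi$ along this motion. The missing idea is that you must charge the displacement of the endpoints along $n$ however it is produced, whether through $P_tM'$ or through the part of $M'$ parallel to $u'$.

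The paper does exactly this by splitting velocities along the fixed normal direction rather than along $u'$. Applying Minkowski to $A$ and $B$ separately gives $L\ge\bigl((\int|\dot A_P|+|\dot B_P|)^2+(\int|\dot A_z|+|\dot B_z|)^2\bigr)^{1/2}$. The normal term is at least $2\max\{h,z_m\}$, where $z_m=\max_t|u_z(t)|$.

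The tilt you were worried about is measured precisely by $z_m$, and it enters two ways:
\begin{itemize}
\item it is paid for in the spherical bound $\int|\dot u|\ge 2\arccos\bigl(\sqrt{1-z_m^2}\cos\frac{\theta}{2}\bigr)$;
\item it only helps the in-plane term, where $\int|\dot u_P|\ge\ell(z_m)$ because $u_P$ must avoid the ball of radius $\sqrt{1-z_m^2}$ (Lemma~\ref{lem:min_path}).
\end{itemize}
The paper then weights the two resulting bounds by $\cos^2\frac{\theta}{2}$ and $\sin^2\frac{\theta}{2}$ and shows $H(z)\ge H(0)=\theta^2$, which gives $\sqrt{\theta^2+4h^2\cos^2\frac{\theta}{2}}$. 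The factor $\cos\frac{\theta}{2}$ comes from this weighting, not from projecting a midpoint displacement onto a bisector-perpendicular direction.
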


The final section is devoted to a strengthened lower-bound estimate (Theorem~\ref{prop:self_rotating_case_in_R3}), from which Theorem~\ref{thm:theorem_2} is derived.

\smallskip
\noindent \textbf{Remark.} Theorem~\ref{thm:theorem_2} also implies that if $L(A_0B_0,A_1B_1)=|\theta|$, then the points $A_0$, $B_0$, $A_1$, and $B_1$ are coplanar. 

Denote the lower bound in Theorem~\ref{thm:theorem_2} by $G(\theta,h)$. Then $G(0,h)=2h$, $G(\theta,0)=|\theta|$, $G(\pi,h)=\pi$. The estimate is sharp when $h=0$ or $\theta=0$. In the case $\theta=\pi$ it is sharp if and only if $0\le h\le 1$; this follows from Theorem~\ref{thm:rotation_prop_1}. 

For each fixed $\theta\in (-\pi,\pi)$, the function $G(\theta, h)$ is strictly increasing in $h\ge 0$. For each fixed $h\in [0,1]$, the function $G(\theta,h)$ is strictly increasing in $\theta\in[0,\pi]$. If $h>1$, then $G(\theta, h)$ is not monotone as a function of $\theta$ on $[0,\pi]$.

\begin{theorem}
\label{thm:rotation_prop_1}
Let $A_0B_0$ and $A_1B_1$ be unit segments in $\mathbb{R}^d$, where $d\ge 2$, and let
\[
\angle(\overrightarrow{A_0B_0},\overrightarrow{A_1B_1})=\theta,
\qquad 0\le |\theta|\le \pi.
\] 
Then there exists a continuous rigid motion satisfying $L(A_0B_0,A_1B_1)=|\theta|$
if and only if the points $A_0$, $B_0$, $A_1$, and $B_1$ are coplanar and
\begin{equation}
\label{eq:rotating_1}
|A_0B_1|\le 1;\quad |A_1B_0|\le 1.
\end{equation}
\end{theorem}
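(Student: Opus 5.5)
The plan is to reduce any extremal motion to an instantaneous rotation about a point of the segment. Then both directions of the equivalence become statements about one explicit integral formula for the final position.

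\emph{Step 1: the structure of an extremal motion.} Since $u=B-A$, a.e. we have $|A'(t)|+|B'(t)|\ge |B'(t)-A'(t)|=|u'(t)|$. Moreover $\int_0^1|u'|\,dt$ is the length of a curve on the unit sphere from $u(0)$ to $u(1)$, so it is $\ge|\theta|$. Hence $L(A_0B_0,A_1B_1)=|\theta|$ forces two things.
\begin{itemize}
\item First, $u$ runs monotonically along a great-circle arc of length $|\theta|$. So $u(t)$ stays in a fixed $2$-plane $\Pi$, and we may write $u=u(\varphi)$ with $du/d\varphi=Ju$, where $J$ is the rotation by $\pi/2$ in $\Pi$ and $\varphi$ runs monotonically from $0$ to $\theta$.
\item Second, $|A'|+|B'|=|B'-A'|$ a.e., so $A'$ and $B'$ are antiparallel (or one of them vanishes).
\end{itemize}
Rigidity gives $A'\cdot u=B'\cdot u$. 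If $A'=-\lambda B'$ with $\lambda\ge0$, this yields $A'\cdot u=B'\cdot u=0$. Therefore $B'-A'=u'$ is parallel to $Ju$, and so are $A'$ and $B'$. We can then write, for some measurable $a(\varphi)\in[0,1]$,
\[
A'=-a\,Ju\,\varphi',\qquad B'=(1-a)\,Ju\,\varphi'.
\]
This says the motion is an instantaneous rotation about the point $A+a\,u$ of the segment. In particular $A$ and $B$ move only in directions parallel to $\Pi$. Since $u_0\in\Pi$, all of $A(t),B(t)$ stay in the affine plane $A_0+\Pi$, which gives coplanarity of $A_0,B_0,A_1,B_1$. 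The case $\theta=0$ is trivial: equality forces $A$ and $B$ to be constant.

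\emph{Step 2: integral formulas for the endpoints.} Integrating the expressions from Step 1 and using $du=Ju\,d\varphi$, we get
\[
A_1-B_0=-u_0-\int_0^\theta a\,du,\qquad B_1-A_0=u_0+\int_0^\theta (1-a)\,du .
\]
So the question becomes purely planar. Determine the set $R$ of pairs $(A_1,B_1)$ obtained as $a$ ranges over measurable functions $[0,\theta]\to[0,1]$, or over step functions, which is enough by density and continuity. Then show that, among coplanar configurations with direction change $\theta$, this set is exactly the one cut out by $|A_1B_0|\le1$ and $|A_0B_1|\le1$.

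\emph{Step 3: sufficiency by explicit construction.} For sufficiency I would take step functions with values in $\{0,1\}$, or a single intermediate value. Concretely, I would use a rotation about $A$ by some angle $\alpha$, followed by a rotation about $B$ by $\theta-\alpha$, possibly with a third pivot about an interior point. The endpoint $A_1$ then sweeps a family of circular arcs, and I would check that these arcs fill the required region. The geometric picture is that $A_1$ must lie in the unit disc about $B_0$, and $B_1$ in the unit disc about $A_0$. Given coplanar segments meeting this condition, I would choose the pivot angles by an intermediate-value argument.

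\emph{Step 4: necessity, the main obstacle.} The set $\{\int a\,du\}$ is a planar zonoid generated by the arc. It is bounded by the arc $u(\varphi)-u_0$ and its centrally symmetric copy, so $-u_0-\int a\,du$ can have norm larger than $1$ for some $a$. The inequality $|A_1B_0|\le1$ therefore does not follow from the formula for $A_1$ alone. It must come from the coupling between $A_1$ and $B_1$ through the same function $a$, together with the normalisation that $\theta$ is the given angle and $|\theta|\le\pi$. My first attempt would be to eliminate $a$ and rewrite the pair of constraints as the statement that the point $-u_0-\int a\,du$ lies in the closed unit disc. I would check the extreme step functions, namely a single switch between $a=0$ and $a=1$. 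Then I would use the convexity of the zonoid parametrisation to argue that it suffices to test the case where $a$ is monotone in $\varphi$. In that case $u_0+\int a\,du$ is a convex combination of the points $u(\varphi)$, and hence has norm $\le1$.
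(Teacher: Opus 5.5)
\textbf{Verdict: there is a genuine gap.} Both directions of the theorem are left unproved. Your Step 4 rests on a false claim, and Step 3 is only a plan.

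\textbf{What works.} Your Steps 1 and 2 are sound and match the paper's setup; your $a$ is the paper's $1-k$. Deriving coplanarity directly from the equality case, as you do, is more self-contained than the paper's appeal to Theorem~\ref{thm:theorem_2}.

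\textbf{The gap in necessity (Step 4).} Your claim that $-u_0-\int_0^\theta a\,du$ can have norm larger than $1$ is false. The boundary arcs $u(\varphi)-u_0$ and $u_1-u(\varphi)$ of the zonoid $W=\{\int_0^\theta a\,du\}$ lie on the unit circles centred at $-u_0$ and at $u_1$. For $0\le\theta\le\pi$ one has $W=\bar D(-u_0,1)\cap\bar D(u_1,1)$, so $|u_0+w|\le 1$ for every $w\in W$. Hence each distance condition follows from its own formula: $|A_1B_0|$ from $a$, and $|A_0B_1|$ from $1-a$. There is no coupling to exploit beyond the trivial relation $B_1=A_1+u_1$.

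Your fallback also fails as stated.
\begin{itemize}
\item The point $u_0+\int a\,du$ is a convex combination of the $u(\varphi)$ only when $a$ is nonincreasing. For $a=\mathbf 1_{[s,\theta]}$ with $0<s<\theta$ it equals $u_0+u_1-u(s)$, which lies outside the convex hull of the arc. Its norm is at most $1$ only because $|s-\theta/2|\le\theta/2\le\pi/2$.
\item Reducing to monotone $a$ needs the unproved fact that the maximisers $\mathbf 1_{\{p\cdot u'>0\}}$ are initial or final subintervals of $[0,\theta]$. This again rests on $\theta\le\pi$, which your sketch mentions but never uses. Without it the bound is false: for a full turn, $a=\mathbf 1_{[\pi,2\pi]}$ gives $3u_0$.
\end{itemize}

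\textbf{The missing estimate.} What you need is $|u_0+\int_0^\theta k\,du|\le1$ for every measurable $k\colon[0,\theta]\to[0,1]$ with $0\le\theta\le\pi$. The paper extends $k$ by $0$ to $[0,\pi]$ and absorbs $u_0$ into the integral via $u_0=\tfrac12\int_0^\pi(\sin\varphi,-\cos\varphi)\,d\varphi$. This gives $v=\int_0^\pi(\tfrac12-k)(\sin\varphi,-\cos\varphi)\,d\varphi$, hence $|p\cdot v|\le\tfrac12\int_0^\pi|\sin(\varphi-\alpha)|\,d\varphi=1$ for every unit vector $p$. In your zonoid language, the support function in direction $p$ is $p\cdot u_0+\int_0^\theta(p\cdot u')^+\,d\varphi\le p\cdot u_0+\int_0^\pi(p\cdot u')^+\,d\varphi=1$.

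\textbf{Sufficiency (Step 3).} You never check that the arcs fill the admissible region, and you never specify the pivots. The paper's construction is explicit: rotate about $A_0$ until the segment passes through the point $X_1$ where the perpendicular bisector of $A_0A_1$ meets $A_1B_1$, then rotate about $X_1$. It then verifies that the total angle is at most $\pi$.

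Your zonoid viewpoint, carried out correctly, would prove both directions at once. The set of $A_1$ reachable by extremal motions is exactly $A_0-W=\bar D(B_0,1)\cap\bar D(A_0-u_1,1)$. This is precisely the set cut out by $|A_1B_0|\le1$ and $|A_0B_1|\le1$.
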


\smallskip
\noindent \textbf{Remark.} For planar motions, the latter characterization is due to Icking, Rote, Welzl, and Yap~\cite{IckingRoteWelzlYap1993}. In their treatment, however, the necessity part of Theorem~\ref{thm:rotation_prop_1} appears only implicitly as a consequence of their main theorem, whereas our proof is direct and follows a substantially different approach.

The sufficiency part was also established in~\cite{IckingRoteWelzlYap1993}, where it was shown that condition~\eqref{eq:rotating_1} guarantees the existence of an extremal angular motion consisting of at most three rotations about the endpoints of the segment. Our proof shows that two rotations always suffice, although one of the centers of rotation may lie in the interior of the segment. The special case in which a single rotation suffices is discussed after the proof of the theorem.

\smallskip
The main objective of this paper is to prove Theorems~\ref{thm:iff_sliding}, \ref{thm:theorem_2}, \ref{thm:rotation_prop_1}, and Theorem~\ref{prop:self_rotating_case_in_R3}, and to characterize all cases in which equality holds in Theorems~\ref{thm:iff_sliding} and~\ref{thm:rotation_prop_1}.

\section{Proof of Theorem~\ref{thm:iff_sliding}}
Intuitively, conditions~\eqref{eq:prop1_eq2} and \eqref{eq:prop1_eq3}
express the fact that the projections of the moving segment onto the two
sliding directions do not change sign during the motion.

We may assume that $d=3$. The case $A_0B_0\equiv A_1B_1$ is trivial. It is clear that when either $A_0=A_1$ or $B_0=B_1$ then the equality in \eqref{eq:proposition_1} is impossible and either \eqref{eq:prop1_eq2} or \eqref{eq:prop1_eq3} is not satisfied. Let $\ell_A$ and $\ell_B$ be the two lines determined by $A_0A_1$ and $B_0B_1$, respectively. The case $\alpha=0$, that is, when $\ell_A \parallel \ell_B$, can be checked straightforwardly. We further assume $\ell_A\nparallel \ell_B$. Denote by \(h\) the distance between these lines and let \(\alpha\) be the angle between the lines \(\ell_A\) and \(\ell_B\). Since the length of the trajectory of \(A\) is at least \(|A_0A_1|\), and the length of the trajectory of \(B\) is at least \(|B_0B_1|\), we obtain
\[
L(A_0B_0,A_1B_1)\ge |A_0A_1|+|B_0B_1|.
\]
Equality holds precisely when both trajectories are shortest paths,
namely when \(A\) moves monotonically along \(A_0A_1\) and \(B\) moves
monotonically along \(B_0B_1\).

Hence it remains to determine when a unit segment can slide continuously
from \(A_0B_0\) to \(A_1B_1\) while its endpoints move monotonically
along the segments \(A_0A_1\) and \(B_0B_1\), respectively.

Let $P$ be a plane parallel to the lines $\ell_A$ and $\ell_B$. Let $\ell'_A$ and $\ell'_B$ be the projections of the lines $\ell_A$ and $\ell_B$ onto $P$ respectively and \(O\) be the intersection point of $\ell'_A$ and $\ell'_B$. Since the moving segment has length \(1\), it follows that \(h<1\).

Consider the circle
\[
k=\left\{t\in P:\ |Ot|=\frac{\sqrt{1-h^2}}{\sin\alpha}\right\}.
\]
\begin{figure}[H]
    \centering
    \includegraphics[width=0.5\linewidth]{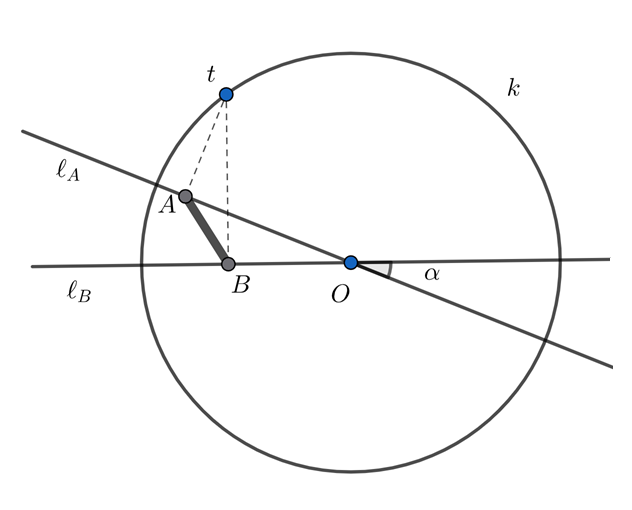}
    \caption{}
    \label{fig:theorem_1}
\end{figure}

For every point \(t\in k\), let \(A_t\) and \(B_t\) be the orthogonal projections of \(t\) onto \(\ell_A\) and \(\ell_B\), respectively. (Fig.~\ref{fig:theorem_1} illustrates the planar case.) A straightforward computation shows that
\[
|A_tB_t|=1.
\]
Conversely, every position of a unit segment with endpoints on \(\ell_A\) and \(\ell_B\) arises in this way. Hence the admissible configurations of the moving segment are parametrized by the points of the circle \(k\).

The lines $\ell'_A,\ell'_B$ divide \(k\) into four arcs. On each such arc, the points \(A_t\) and \(B_t\) move monotonically along \(\ell_A\) and \(\ell_B\), respectively. Indeed, along a fixed arc the signs of the corresponding projection coordinates remain unchanged, and therefore neither endpoint can reverse its direction of motion.

Consequently, a length-minimizing sliding motion exists if and only if the points of \(k\) corresponding to the initial and final configurations belong to the same closed arc of this decomposition. This is equivalent to requiring that the projections of the initial and final unit segments onto each sliding direction have the same sign.

For the direction \(\ell_A\), this condition is
\[
\left(\overrightarrow{A_0A_1}\cdot \overrightarrow{A_0B_0}\right)
\left(\overrightarrow{A_0A_1}\cdot \overrightarrow{A_1B_1}\right)\ge 0.
\]
For the direction \(\ell_B\), it is
\[
\left(\overrightarrow{B_0B_1}\cdot \overrightarrow{A_0B_0}\right)
\left(\overrightarrow{B_0B_1}\cdot \overrightarrow{A_1B_1}\right)\ge 0.
\]
These are precisely conditions \eqref{eq:prop1_eq2} and \eqref{eq:prop1_eq3}.
If the two inequalities hold, the corresponding points of \(k\) can be joined by an arc contained in one of the four sectors. Along this arc, both \(A_t\) and \(B_t\) move monotonically from their initial to their final positions. Therefore $L(A_0B_0,A_1B_1)=|A_0A_1|+|B_0B_1|$.

Conversely, if $L(A_0B_0,A_1B_1)=|A_0A_1|+|B_0B_1|$ then both endpoints must move monotonically along the segments \(A_0A_1\) and \(B_0B_1\). The corresponding point \(t\in k\) therefore remains in a single sector of the above decomposition. Consequently, the projections of the initial and final unit segments onto each sliding direction have the same sign, which yields conditions \eqref{eq:prop1_eq2} and \eqref{eq:prop1_eq3}.

\smallskip
\noindent \textbf{Remark.} It can be shown that the line through \(O\) perpendicular to $P$ is the instantaneous axis of rotation of the moving segment $A_tB_t$; see, for example, \cite{Bo_Roth_1990}.

\section{Proof of Theorem~\ref{thm:rotation_prop_1}}

\noindent \textbf{Necessity.}

Theorem~\ref{thm:theorem_2} shows that if $L(A_0B_0,A_1B_1)=|\theta|$ then $A_0,B_0,A_1,B_1$ are coplanar and the extremal motion remains in $\mathbb{R}^2$.

\begin{definition}
A planar motion of a segment is called an \textbf{extremal angular motion} if
\[
L(A_0B_0,A_1B_1)=|\theta|.
\]
\end{definition}
\noindent 
\textbf{Remark.} A basic example of an extremal angular motion is a rotation through an angle
\(\theta\), with \(|\theta|\le\pi\), about a point \(O\in A_0B_0\).
More generally, it may consist of a sequence of rotations in the same direction,
whose total angle is \(\theta\), where \(|\theta|\le\pi\).
There are many other extremal angular motions. The following result shows that,
for any such motion, the instantaneous center of rotation must lie on the
segment \(AB\) at every moment.

\smallskip
Suppose we have an extremal angular motion and let $A(t), B(t),t\in[0,1]$ be the curves traced by the points $A$ and $B$, respectively, where $A(0)=A_0, A(1)=A_1$, $B(0)=B_0, B(1)=B_1$. Set $u(t):=B(t)-A(t)$. The angle between $u(0)$ and $u(1)$ is $\theta$. 

It suffices to prove that $|B(1)-A(0)|\le 1$. The other condition follows by symmetry. We have
\begin{align*}
    L(A_0B_0, A_1B_1) &= \int_0^1|\dot{A}(t)|+|\dot{B}(t)|\,dt \\
    &\ge \int_0^1 |\dot{B}(t)-\dot{A}(t)|\ dt = \int_0^1 |\dot{u}(t)|\ dt \\
    &\ge |\theta|  
\end{align*}
The equality is attained if and only if: 
\begin{enumerate}
 \item 
 $|\dot{A}(t)|+|\dot{B}(t)|=|\dot{B}(t)-\dot{A}(t)|$ holds almost everywhere, that is, when $\dot{A}(t)$ and $\dot{B}(t)$ have opposite directions a.e. This can be written as $\dot{B}(t)= k(t)\dot{u}(t)$, $\dot{A}(t)=-(1-k(t))\dot{u}(t)$, where $u(t)=B(t)-A(t)$ and $\displaystyle k(t):=|\dot{B}(t)|/|\dot{u}(t)|$, when $\left| \dot{u}(t)\right|\ne 0$ and $k(t):=0$, when $\left| \dot{u}(t)\right|=0$. It follows that $k(t)$ is measurable and satisfies $0\le k(t)\le 1$. \\
\item  
$u(t)$ must rotate monotonically. 
\end{enumerate}

Without loss of generality, we may assume that $0\le \theta\le \pi$, $A(0)=(0,0)$, $B(0)=(1,0)$. Since \(u(t)\) rotates monotonically, the angle \(\varphi(t)\) between \(u(t)\) and \(u(0)\) is a monotone function. We may therefore use \(\varphi\) as a parameter of the motion and write
\[
B(\theta)= B(0)+\int_0^{\theta} k(\varphi) u'(\varphi)\, d\varphi.
\]
Let us denote $v:=B(\theta)-A(0)=B(\theta)$. Since $u'(\varphi)=(-\sin\varphi, \cos\varphi)$ and $B(0)=(1,0)$, we obtain
\[
v_x=1-\int_0^\theta k(\varphi)\sin\varphi\,d\varphi,
\quad
v_y=\int_0^\theta k(\varphi)\cos\varphi\,d\varphi,
\]
where \(k:[0,\theta]\to[0,1]\) is a measurable function. We can set $k(\varphi)=0, \varphi\in (\theta,\pi]$ and write 
\[
v_x=1-\int_0^\pi k(\varphi)\sin\varphi\,d\varphi=\int_0^{\pi} \left(\frac{1}{2}-k(\varphi)\right)\sin\varphi \,d\varphi,
\quad
v_y=\int_0^\pi k(\varphi)\cos\varphi\,d\varphi
\]
Let $k_1(\varphi):=\frac{1}{2}-k(\varphi)$, $k_1(\varphi)\in [-1/2,1/2]$, when $\varphi\in[0,\pi]$. Using $\int_0^{\pi}\cos\varphi\,d\varphi=0$, we obtain
\begin{equation}
\label{eq:parseval}
    v_x=\int_0^{\pi} k_1(\varphi)\sin\varphi \,d\varphi, \quad v_y=-\int_0^\pi k_1(\varphi)\cos\varphi\,d\varphi
\end{equation}
 We will prove that \(|v|\le 1\). Let \(p=(\cos\alpha,\sin\alpha)\) be an arbitrary unit vector. Then
\begin{align*}   
p\cdot v &=\int_0^\pi k_1(\varphi)\left(\sin\varphi\cos\alpha -\cos\varphi\sin\alpha\right) \,d\varphi\\
&=\int_0^{\pi} k_1(\varphi) \sin(\varphi-\alpha)\, d\varphi
\end{align*}
Thus,
\[
|p\cdot v|\le \frac{1}{2}\int_0^{\pi} \left|\sin(\varphi-\alpha)\right|\, d\varphi= \frac{1}{2}\int_0^{\pi} \left|\sin(\varphi)\right|\, d\varphi =1,
\]
since $|k_1(\varphi)|\le 1/2$ and $|\sin\varphi|$ is $\pi$-periodic. This inequality holds for every unit vector \(p\) and we conclude $|v|\le 1$.

\smallskip
\noindent \textbf{Remark.} Observe that \( |v|\le 1 \) can also be deduced from \eqref{eq:parseval} via Parseval's identity. Indeed, \(k_1\in L^2([0,2\pi])\) and \(\|k_1\|_{L^2}\le \sqrt{2\pi}\).

\smallskip
\noindent\textbf{Sufficiency.} Suppose that condition \eqref{eq:rotating_1} holds. We shall prove that the segment $A_0B_0$ can be moved to $A_1B_1$ by means of two rotations whose total angle in absolute value does not exceed $\pi$. Observe that there is a symmetry between the two segments, that is, if $A_1B_1$ can be moved to $A_0B_0$ by two rotations, then by applying the inverse rotations, $A_0B_0$ can also be moved to $A_1B_1$ by two rotations.

Consider a coordinate system in which $\{A_0,A_1\}=\{(0,0), (a,0)\}$. By the symmetry described above, we may assume that $A_0=(0,0)$. Let $M$ be the midpoint of $A_0A_1$, and let $\ell$ be the perpendicular bisector of $A_0A_1$. The assumptions imply that $a\le 2$ and that $\ell$ intersects the segments $A_0B_0$ and $A_1B_1$ at points $X_0$ and $X_1$, respectively. There are two possibilities: 1) $X_0$ and $X_1$ lie in the same half-plane with respect to $Ox$; 2) $X_0$ and $X_1$ lie in different half-planes.

\smallskip
\noindent {\bf Case 1.} $X_0$ and $X_1$ lie in the same half-plane with respect to $Ox$, say the upper half-plane. Without loss of generality, we may assume that $X_0M\le X_1M$, that is, $\angle X_0A_0A_1\le \angle X_1A_1A_0$. Indeed, if $|X_0M|\ge |X_1M|$, we consider the reflection with respect to $\ell$, which interchanges the roles of $A_0B_0$ and $A_1B_1$.

Under this assumption, rotate $A_0B_0$ about the point $A_0(0,0)$ in the positive direction until it passes through the point $X_1$ (Fig.~\ref{fig:rotation_sufficient_1}, left side). This is possible because $|A_0X_1|=|A_1X_1|\le 1$. Let $B_0'$ denote the image of $B_0$ at that moment. Next, rotate $A_0B_0'$ in the same (positive) direction about the point $X_1$ until $A_0$ reaches the point $A_1$. We shall prove that the total angle of rotation $\theta$ satisfies $0<\theta\le \pi$.

\begin{figure}[H]
    \centering
    \includegraphics[width=0.6\linewidth]{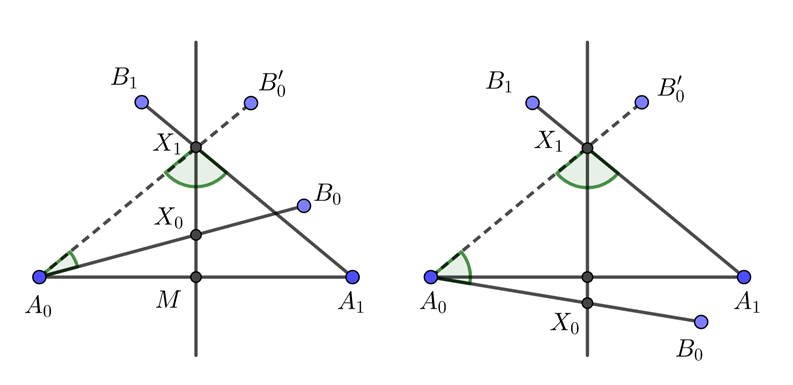}
     \caption{}
    \label{fig:rotation_sufficient_1}     
\end{figure}

The first rotation is through the angle $\angle X_1A_1A_0- \angle X_0A_0A_1\ge 0$. The second rotation is through the angle $\pi-2 \angle X_1A_1A_0$. Hence,
\[(\angle X_1A_1A_0- \angle X_0A_0A_1)+(\pi-2\angle X_1A_1A_0)=\pi-\angle X_0A_0A_1 - \angle X_1A_0A_1\le \pi.\]

\smallskip
\noindent {\bf Case 2.} $X_0$ and $X_1$ lie in different half-planes with respect to $Ox$. Assume for definiteness that $X_0$ lies in the upper half-plane (Fig.~\ref{fig:rotation_sufficient_1}, right side). By symmetry, we may assume without loss of generality that $|X_0M|\le |X_1M|$, that is, $\angle X_0A_0A_1\le \angle X_1A_1A_0$.

Rotate $A_0B_0$ in the negative direction about $A_0$ until it passes through the point $X_1$. The point $B_0$ is moved to a point $B'_0$. Then rotate $A_0B'_0$ in the same (negative) direction about the point $X_1$ so that $A_0$ is moved to $A_1$. The total angle of rotation is
\[
( \angle X_0A_0A_1+\angle X_1A_1A_0)+ (\pi-2\angle X_1A_1A_0)=\pi +\angle X_0A_0A_1-\angle X_1A_1A_0\le \pi. 
\]
\smallskip
\noindent \textbf{Remark.} In the proof of sufficiency we established that, whenever the conditions of Theorem~\ref{thm:rotation_prop_1} are satisfied, the segment $A_0B_0$ can be moved to $A_1B_1$ by means of two rotations in the same direction whose total angle is $\theta$, $0<|\theta|\le \pi$. According to the above argument, one of these rotations is about the point $A_0$, while the other is about the point $X_1$, although their order may be reversed. In the proof presented above, the first rotation is about $A_0$ whenever $X_0M\le X_1M$. This condition is equivalent to $A_1B_0\le A_0B_1$.

In the case when $|X_0M|> |X_1M|$, that is, $|A_1B_0|> |A_0B_1|$, the first rotation is about a different point. However, if we interchange the roles of $A_i$ and $B_i$, then from $|B_1A_0|<|B_0A_1|$ it follows that we may first rotate about $B_0$ and then about another point. Thus, the first rotation can always be chosen to be either about $A_0$ or about $B_0$.

It is interesting to note that the extremal motion can be realized by a single rotation only when the segments $A_0B_0$ and $A_1B_1$ intersect at a point $O$ satisfying $|OA_0|=|OA_1|$. Indeed, there exists a unique rotation centered at $O$ that maps $A_0B_0$ to $A_1B_1$. In this case,
\[
L(A_0B_0,A_1B_1)=|OA_0||\theta|+|OB_0||\theta|,
\]
and in order to have $L(A_0B_0,A_1B_1)=|\theta|$, it is necessary that $|OA_0|+|OB_0|=1=|A_0B_0|$, that is, $O\in A_0B_0$. In the same way, $O\in A_1B_1$, thus $O\in A_0B_0\cap A_1B_1$.

\section{A sharper estimate and the proof of Theorem~\ref{thm:theorem_2}}

The proof of Theorem~\ref{thm:theorem_2} is based on the following strengthened lower-bound theorem.
\begin{theorem}
\label{prop:self_rotating_case_in_R3}
Let $A_0B_0$ and $A_1B_1$ be unit segments in $\mathbb{R}^d$, $d\ge 3$,
and let $h\ge 0$ be the distance between the lines containing them. 
Assume that the angle between their directions is $\theta, 0<|\theta|\le \pi$. Then
\[
L(A_0B_0,A_1B_1)
\ge
F(\theta, h),
\]
where
\[
F(\theta,h)=
\min_{0\le z\le 1}\,\max\left\{
\sqrt{4\left(\max\{h,z\}\right)^2+\ell(z)^2},
\,2\arccos\left(\sqrt{1-z^2}\cos\frac{\theta}{2}\right)
\right\}.
\]

\[
\ell(z)=
\begin{cases}
2z+\sqrt{1-z^2}\bigl(|\theta|-2\arcsin z\bigr),
&
0\le z\le \sin{\frac{|\theta|}{2}}
\\[8pt]
2\sin\dfrac{|\theta|}{2},
&
\sin{\frac{|\theta|}{2}}<z\le 1
\end{cases}
\]
\end{theorem}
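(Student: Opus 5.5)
The structure of $F$ as a $\min_z\max\{\cdot,\cdot\}$ suggests a parameter $z$ attached to the motion, with two lower bounds that hold simultaneously and pull in opposite directions as $z$ varies. For a given motion with endpoint trajectories $A(t),B(t)$ and $u(t)=B(t)-A(t)$, I would take $z$ to measure how far the direction curve $u(t)$ leaves the great circle through $u(0)$ and $u(1)$ on $S^{d-1}$. Concretely, let $\Pi$ be the $2$-plane spanned by $u(0),u(1)$ (for $|\theta|=\pi$, choose $\Pi$ suitably among the planes containing $u(0)$). Set
\[
z:=\max_{t}\operatorname{dist}\bigl(u(t),\Pi\bigr)\in[0,1].
\]
Then $\sqrt{1-z^2}$ is the smallest norm of the projection of $u(t)$ onto $\Pi$. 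It then suffices to show that for this $z$ the motion satisfies $L\ge$ each of the two expressions in the maximum.

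\textbf{Second (angular) bound.} As in the necessity part of Theorem~\ref{thm:rotation_prop_1}, $L\ge\int|\dot A|+|\dot B|\ge\int|\dot u|$, which is the spherical length of the curve $u$ on $S^{d-1}$. This curve joins two points at angle $\theta$ and reaches a point $w$ at distance $z$ from $\Pi$. Its spherical length is therefore at least $\angle(u(0),w)+\angle(w,u(1))$. By symmetry and convexity, the minimum over such $w$ is attained above the midpoint direction of $u(0),u(1)$. There $\cos\angle(u(0),w)=\sqrt{1-z^2}\cos(\theta/2)$, which gives $2\arccos\bigl(\sqrt{1-z^2}\cos\frac{\theta}{2}\bigr)$. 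One must check that the off-midpoint positions are worse; I expect a spherical triangle inequality or reflection argument to handle this.

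\textbf{First bound.} Decompose $\mathbb{R}^d=\Pi\oplus\Pi^\perp$ and split each endpoint velocity into components. The trajectory of $A$ has length at least $\int\sqrt{|\dot A_\Pi|^2+|\dot A_\perp|^2}$. Summing over $A$ and $B$ and applying Minkowski's inequality gives
\[
L\ge\sqrt{\Bigl(\int|\dot A_\Pi|+|\dot B_\Pi|\Bigr)^2+\Bigl(\int|\dot A_\perp|+|\dot B_\perp|\Bigr)^2}.
\]
For the perpendicular part I would use two facts. The lines are at distance $h$, so the perpendicular displacement of the segment (suitably averaged) is at least $h$ in some normal direction; and $u_\perp$ must go from $0$ out to norm $z$ and back to $0$. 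Each fact forces $\int|\dot A_\perp|+|\dot B_\perp|\ge 2\max\{h,z\}$, with the factor $2$ coming from out-and-back, or from two endpoints each travelling $h$.

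For the planar part, the projected segment has length at least $\sqrt{1-z^2}$ and its direction turns by $\theta$. Two sub-cases give $\ell(z)$. If $z>\sin(|\theta|/2)$, use only chord lengths of the projected endpoints, $\ge2\sin(|\theta|/2)$. Otherwise, mimic a planar Ulam-type estimate: the projected segment of length $\ge\sqrt{1-z^2}$ rotates by $\theta$ while the segment is tilted. I would model the optimum as a sliding of total $2z$ followed by a rotation of $\theta-2\arcsin z$ of a segment of effective length $\sqrt{1-z^2}$, and prove it as a lower bound by the $\int|\dot u_\Pi|$ argument applied to the angle of the projection.

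\textbf{Main obstacle.} The hardest step is making the first bound rigorous: justifying that the perpendicular and planar contributions can be bounded separately and then combined by Minkowski, and proving the formula for $\ell(z)$ in the regime $z\le\sin(|\theta|/2)$. I would first try to get $\ell(z)$ by parametrizing by the angle $\varphi$ of the projected direction, exactly as in the proof of Theorem~\ref{thm:rotation_prop_1}. The first $\arcsin z$ of rotation at each end should be chargeable against the tilting, costing $z$ each, rather than against rotation.

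Having both bounds for the motion's own $z$, we get $L\ge\max\{\dots\}$ at that $z$, hence $L\ge F(\theta,h)$.
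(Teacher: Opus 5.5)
Your plan follows the paper's proof in outline. It uses the Minkowski splitting of $|\dot A|+|\dot B|$ into in-plane and normal parts. The normal part is bounded by $2\max\{h,z\}$: each endpoint's normal displacement equals $h$, and $u_\perp$ goes out to norm $z$ and back. The in-plane part is bounded by $\int|\dot u_\Pi|\ge\ell(z)$, and the angular bound comes from the point of $u$ farthest from the plane. The Minkowski step you list as an obstacle is routine: it is the first display of the paper's proof, followed by $|\dot A_\Pi|+|\dot B_\Pi|\ge|\dot u_\Pi|$.

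The one structural difference is the splitting. You project onto the $2$-plane $\Pi=\operatorname{span}(u_0,u_1)$ and measure $z$ as distance from $\Pi$. The paper projects onto the hyperplane $P$ orthogonal to the common perpendicular $e_z$ and takes $z_m=\max_t|u(t)\cdot e_z|$. In $d=3$ these coincide. In $d>3$ yours is slightly cleaner: $u_\Pi$ is already planar, so the flattening step in Lemma~\ref{lem:min_path} is unnecessary, and the midpoint optimization in the angular bound is over a circle rather than a sphere. The price is at $|\theta|=\pi$, where ``suitably'' must mean a plane through $u_0$ orthogonal to the common perpendicular of the two parallel lines. Otherwise the normal displacements of $A$ and $B$ are smaller than $h$, and the $2h$ bound fails.

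The step you leave open, $\int|\dot u_\Pi|\ge\ell(z)$ for $z\le\sin(|\theta|/2)$, is exactly Lemma~\ref{lem:min_path}. It is not an Ulam-type segment estimate. Nor can you reuse the $k(\varphi)$ parametrization from Theorem~\ref{thm:rotation_prop_1}, which relies on the equality conditions. It is a pure shortest-path fact. With $r=\sqrt{1-z^2}$, $u_\Pi$ is a planar curve from $u_0$ to $u_1$ inside $\{|x|\ge r\}$, so its length is at least that of the tangent--arc--tangent path, $2\sqrt{1-r^2}+r(|\theta|-2\arccos r)=\ell(z)$. Your ``$\arcsin z$ of turning at cost $z$ at each end'' is precisely the tangent segment. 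The paper asserts this fact without proof.

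To make it rigorous, write $u_\Pi=\rho(\cos\varphi,\sin\varphi)$, oriented so that $\varphi$ increases by at least $|\theta|$, and set $g(\rho)=\sqrt{\rho^2-r^2}-r\arccos(r/\rho)$. Since $g'(\rho)^2+r^2/\rho^2=1$, Cauchy--Schwarz gives $|r\dot\varphi\pm g'(\rho)\dot\rho|\le|\dot u_\Pi|$. Integrate $r\varphi-g(\rho)$ up to a time where $\rho$ is minimal, and $r\varphi+g(\rho)$ afterwards. This yields $\int|\dot u_\Pi|\ge r|\theta|+2g(1)$ when $\min\rho=r$. If $\min\rho>r$, apply the same argument with $r$ replaced by $\min\rho$. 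The tangent--arc--tangent length is nondecreasing in $r$ (its $r$-derivative is $|\theta|-2\arccos r\ge 0$), which finishes that case.

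Finally, the midpoint claim in your angular bound does not follow from convexity, since $x\mapsto\arccos(c\cos x)$ is convex only on $[0,\pi/2]$. It does follow by comparing derivatives. The derivative $c\sin x/\sqrt{1-c^2\cos^2x}$ is increasing in $\sin x$, and $\sin\psi\le\sin(|\theta|-\psi)$ for $0\le\psi\le|\theta|/2$.
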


The proof of Theorem~\ref{prop:self_rotating_case_in_R3} is deferred until the end of this section.

\smallskip
\noindent \textbf{Proof of Theorem}~\ref{thm:theorem_2}. 

We may assume that \(0\le \theta\le \pi\). 
We prove that
\[
F(\theta,h)\ge \sqrt{\theta^2+4h^2\cos^2\frac{\theta}{2}}.
\]

Let
\[
B(z)=2\arccos\left(\sqrt{1-z^2}\cos\frac{\theta}{2}\right),
\]
thus
\begin{equation}
\label{eq:F_theta_h}
F(\theta,h)=
\min_{0\le z\le 1}\,\max\left\{
\sqrt{4\left(\max\{h,z\}\right)^2+\ell(z)^2},
\,B(z)
\right\}.
\end{equation}

Fix \(z\in[0,1]\) and set
\[
R=\sqrt{\theta^2+4h^2\cos^2\frac{\theta}{2}}.
\]
Suppose, for contradiction, that both terms inside the maximum in~\eqref{eq:F_theta_h} are strictly smaller than \(R\). Then
\[
4h^2+\ell(z)^2 \le 4\left(\max\{h,z\}\right)^2+\ell(z)^2<R^2
\]
and
\[
B(z)^2<R^2.
\]
Multiplying the first inequality by \(\cos^2(\theta/2)\), the second by
\(\sin^2(\theta/2)\), and adding, gives
\[
\cos^2\frac{\theta}{2}\,\ell(z)^2
+
\sin^2\frac{\theta}{2}\,B(z)^2
<
R^2
-
4h^2\cos^2\frac{\theta}{2}=\theta^2.
\]
Thus
\begin{equation}
\label{eq:auxiliary_ineq_opposite}
\cos^2\frac{\theta}{2}\,\ell(z)^2
+
\sin^2\frac{\theta}{2}\,B(z)^2
<
\theta^2,
\end{equation}
We shall prove that actually the reverse inequality holds, namely
\begin{equation}
\label{eq:auxiliary_ineq}
H(z):=\cos^2\frac{\theta}{2}\,\ell(z)^2
+
\sin^2\frac{\theta}{2}\,B(z)^2
\ge \theta^2.
\end{equation}
This will yield the desired contradiction. Indeed, we will show that \(H(z)\ge H(0)=\theta^2\).

First suppose that
\[
0\le z\le \sin\frac{\theta}{2}.
\]
Then
\[
\ell(z)=2z+\sqrt{1-z^2}\left(\theta-2\arcsin z\right),
\]
and hence
\[
\ell'(z)
=
-\frac{z}{\sqrt{1-z^2}}\left(\theta-2\arcsin z\right)\le 0.
\]
Thus \(\ell(z)\le \ell(0)=\theta\). Moreover,
\[
B'(z)
=
\frac{2z\cos(\theta/2)}
{\sqrt{1-z^2}\,
\sqrt{\sin^2(\theta/2)+z^2\cos^2(\theta/2)}}.
\]
Since
\[
\frac{B(z)}{2}
=
\arccos\left(\sqrt{1-z^2}\cos\frac{\theta}{2}\right)
\ge \frac{\theta}{2},
\]
and since \(x/\sin x\) is increasing on \([0,\pi/2]\), we have
\[
\frac{B(z)}
{\sqrt{\sin^2(\theta/2)+z^2\cos^2(\theta/2)}}
=
\frac{B(z)}{\sin(B(z)/2)}
\ge
\frac{\theta}{\sin(\theta/2)}.
\]
Therefore
\[
\begin{aligned}
H'(z)
&=
2\cos^2\frac{\theta}{2}\,\ell(z)\ell'(z)
+
2\sin^2\frac{\theta}{2}\,B(z)B'(z) \\
&=
\frac{2z}{\sqrt{1-z^2}}
\left[
-\cos^2\frac{\theta}{2}\,\ell(z)\left(\theta-2\arcsin z\right)
+
2\sin^2\frac{\theta}{2}\cos\frac{\theta}{2}\,
\frac{B(z)}
{\sqrt{\sin^2(\theta/2)+z^2\cos^2(\theta/2)}}
\right] \\
&\ge
\frac{2z}{\sqrt{1-z^2}}
\left[
-\theta^2\cos^2\frac{\theta}{2}
+
2\theta\sin\frac{\theta}{2}\cos\frac{\theta}{2}
\right] \\
&=
\frac{2z\theta\cos(\theta/2)}{\sqrt{1-z^2}}
\left[
2\sin\frac{\theta}{2}-\theta\cos\frac{\theta}{2}
\right].
\end{aligned}
\]
The last expression is nonnegative, since
\[
2\sin\frac{\theta}{2}-\theta\cos\frac{\theta}{2}\ge0
\]
is equivalent to
\[
\tan\frac{\theta}{2}\ge \frac{\theta}{2}.
\]
Hence \(H\) is nondecreasing on \([0,\sin(\theta/2)]\).

Now suppose that
\[
\sin\frac{\theta}{2}\le z\le 1.
\]
Then $\displaystyle \ell(z)=2\sin\frac{\theta}{2}$ is constant, while \(B(z)\) is increasing. Hence \(H\) is nondecreasing on this interval as well. Therefore
\[
H(z)\ge H(0)=\theta^2,
\]
which proves \eqref{eq:auxiliary_ineq}, contradicting \eqref{eq:auxiliary_ineq_opposite}. Therefore, for every \(z\in[0,1]\),
\[
\max\left\{
\sqrt{4\left(\max\{h,z\}\right)^2+\ell(z)^2},
\,B(z)
\right\}
\ge R.
\]
Taking the minimum over \(z\in[0,1]\), we obtain
\[
F(\theta,h)\ge \sqrt{\theta^2+4h^2\cos^2\frac{\theta}{2}}.
\]
Combining this with Theorem~\ref{prop:self_rotating_case_in_R3} completes the proof of Theorem~\ref{thm:theorem_2}.
\qed

\medskip
\noindent \textbf{Proof of Theorem}~\ref{prop:self_rotating_case_in_R3}.

We may assume that $0\le \theta\le \pi$. Let $P_0$ and $P_1$ be the two parallel hyperplanes at distance $h$ with $A_i,B_i\in P_i$, $i=0,1$. We introduce a coordinate system such that the $Oz$-axis is perpendicular to $P_0$ and directed towards $P_1$. For any vector $v\in\mathbb{R}^d$, we denote by $v_z$ and $v_P$ its components in the direction of the $z$-axis and parallel to $P_0$, respectively. We also denote $u(t)=B(t)-A(t)$. We have
\[
\begin{aligned}
&\int_0^1 |\dot{A}(t)|+|\dot{B}(t)|\,dt\\
&=\int_0^1 |\dot{A}_P(t)+\dot{A}_z(t)e_z|+|\dot{B}_P(t)+\dot{B}_z(t)e_z|\,dt \\
&= \int_0^1 \left(|\dot{A}_P|^2+|\dot{A}_z|^2 \right)^{1/2}\,dt + \int_0^1 \left(|\dot{B}_P|^2+|\dot{B}_z|^2 \right)^{1/2}\,dt\\
&\ge \left(\left( \int_0^1|\dot{A}_P|+ |\dot{B}_P| \,dt\right)^{2} +\left(\int_0^1|\dot{A}_z|+|\dot{B}_z|\,dt\right)^2 \right)^{1/2}\\
&\ge \left(\left( \int_0^1|\dot{u}_P|\, dt \right)^{2} +\left(\int_0^1|\dot{A}_z|+|\dot{B}_z|\,dt\right)^2 \right)^{1/2}.
\end{aligned}
\]
The following Lemma is an auxiliary result. 

\begin{lemma}
\label{lem:min_path}
Let \(u_0,u_1\in\mathbb R^{d-1}\), $d\ge 3$, satisfy
\[
|u_0|=|u_1|=1,
\]
and let \(\theta\), \(0\le \theta\le \pi\), be the angle between \(u_0\) and \(u_1\).
Let \(O\) be the origin, and $0\le r\le 1$.

Among all rectifiable curves \(\gamma\subset\mathbb R^{d-1}\) joining \(u_0\) to \(u_1\)
and satisfying
\[
|x-O|\ge r
\qquad\text{for every }x\in\gamma,
\]
the minimum possible length is
\[
\ell_{\min}
=
\begin{cases}
2\sin\dfrac{\theta}{2},
&
\theta\le 2\arccos r,
\\[8pt]
2\sqrt{1-r^2}+r\bigl(\theta-2\arccos r\bigr),
&
\theta\ge 2\arccos r.
\end{cases}
\]
\end{lemma}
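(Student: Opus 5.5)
We prove the lemma in two halves: an explicit curve attains $\ell_{\min}$, and every admissible curve is at least this long. Throughout we work in the $2$-plane $\Pi$ spanned by $u_0,u_1$ (any line through $O$ if $\theta=\pi$). Set $\beta:=\arccos r\in[0,\pi/2]$, so a tangent from $u_i$ to the sphere $S_r=\{|x|=r\}$ touches it at angle $\beta$ from $u_i$ and has length $\sqrt{1-r^2}$.

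\textbf{Upper bound (construction).} If $\theta\le 2\beta$, we take the chord $[u_0,u_1]$. Its closest point to $O$ is the midpoint, at distance $\cos(\theta/2)\ge\cos\beta=r$, so the chord is admissible and has length $2\sin(\theta/2)$. If $\theta>2\beta$, we take the curve in $\Pi$ formed by the tangent segment from $u_0$ to $S_r$, the arc of $S_r\cap\Pi$ of angle $\theta-2\beta$, and the tangent segment to $u_1$. Every point of this curve lies at distance $\ge r$ from $O$, and its length is $2\sqrt{1-r^2}+r(\theta-2\beta)$.

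\textbf{Lower bound.} Let $\gamma$ be admissible.
\begin{enumerate}
\item If $\theta\le2\beta$, the bound $\mathrm{length}(\gamma)\ge|u_0-u_1|=2\sin(\theta/2)$ is immediate.
\item If $\theta>2\beta$, we use polar-type coordinates. Every point of $\gamma$ is nonzero, so we write $x=\rho\,\omega$ with $\rho=|x|\ge r$ and $\omega\in S^{d-2}$. This gives $|\dot x|^2=\dot\rho^2+\rho^2|\dot\omega|^2$.
\item The spherical projection $\omega(t)$ joins $u_0$ to $u_1$ on the unit sphere, so its spherical length $\int|\dot\omega|$ is at least $\theta$. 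This reduces the problem to the planar polar problem: minimize $\int\sqrt{\dot\rho^2+\rho^2\dot\phi^2}$ with $\phi$ monotone running from $0$ to $\theta$, $\rho\ge r$, and $\rho=1$ at both ends. Concretely, we reparametrize by the arclength $\phi$ of $\omega$ and compare with the planar curve $(\rho(\phi),\phi)$. That planar curve has the same length element, lies in $\{\rho\ge r\}$, and goes from angle $0$ to angle $\phi_{\rm tot}\ge\theta$.
\item We then need the planar statement: a curve outside the disc of radius $r$ joining two unit-modulus points that are separated by a polar angle $\ge\theta$ (measured without wrapping, so in the universal cover if $\phi_{\rm tot}>\pi$) has length at least the stated formula. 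The cleanest route is a calibration in the annulus. Define $f$ to be the length of the shortest obstacle-avoiding path from $u_0$: on the sector where $u_0$ is directly visible, $f(x)=|x-u_0|$, and beyond that, $f(x)=\sqrt{1-r^2}+r(\phi-\beta)+\sqrt{\rho^2-r^2}-\rho\arccos(r/\rho)\cdots$, that is, the usual ``taut string'' formula. Then $|\nabla f|\le1$ in the universal cover of $\{\rho\ge r\}$. It follows that the length of any curve is at least $f(\text{endpoint})$, and $f$ is nondecreasing in $\phi$ on $\rho=1$, which handles $\phi_{\rm tot}\ge\theta$.
\end{enumerate}

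An alternative to the calibration is to split the path at its first and last points that lie on the rays at angles $\beta$ and $\theta-\beta$. The middle part then has length at least $r(\theta-2\beta)$, since $\rho\ge r$ gives length $\ge\int\rho\,d\phi$. For each end part we need a bound of $\sqrt{1-r^2}$, obtained by projecting onto the tangent direction. Making these pieces fit exactly is fiddly, so I would first try the calibration.

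The main obstacle is step 4: rigorously proving the planar obstacle bound, especially handling curves whose angular coordinate is nonmonotone or exceeds $\pi$. Verifying $|\nabla f|\le 1$ for the taut-string distance in the universal cover should suffice, because the distance function to a point in a length space is $1$-Lipschitz. In fact, $f$ is precisely the intrinsic distance in $\{\rho\ge r\}$ lifted to the cover, so it is enough to identify that geodesic distance explicitly as the taut string, which is standard.
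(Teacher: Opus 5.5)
Your argument is correct, and it follows a different route from the paper's in the decisive planar step.

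\textbf{Comparison with the paper.} Both proofs start from the same polar decomposition $x=\rho\,\omega$, $|dx|^2=d\rho^2+\rho^2|d\omega|^2$, together with the fact that $\omega$ has spherical length $\ell\ge\theta$. The paper then compresses the angle: it replaces the curve by $\rho(s)\tilde u(\theta s/\ell)$. Since $\theta/\ell\le 1$, this does not increase length. The new curve lies in the planar sector of opening exactly $\theta\le\pi$ and has monotone angle. So the paper needs neither the universal cover nor monotonicity of $f$ along $\rho=1$, and you could adopt this trick to shorten your steps 3--4. For the planar obstacle problem, however, the paper only asserts that the tangent--arc--tangent curve is shortest. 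Your calibration actually proves it, and that is what your route buys.

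\textbf{Repairs needed.} Two points must be fixed.
\begin{enumerate}
\item In the shadow region the formula must be $f=\sqrt{1-r^2}+\sqrt{\rho^2-r^2}+r\bigl(\phi-\beta-\arccos(r/\rho)\bigr)$, with coefficient $r$, not $\rho$, on $\arccos(r/\rho)$. With this formula, $\partial_\rho f=\sqrt{\rho^2-r^2}/\rho$ and $\rho^{-1}\partial_\phi f=r/\rho$, so $|\nabla f|=1$. Moreover $f$ agrees with $|x-u_0|$ on the tangent ray.
\item Reparametrizing by the arclength of $\omega$ loses radial pieces where $\dot\omega=0$; the paper glosses over this too. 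Instead, keep the original parameter and set $\phi(t)=\int_0^t|\dot\omega|$. This also shows that $\phi$ is monotone by construction, so your worry about a nonmonotone angular coordinate is moot.
\end{enumerate}

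\textbf{The alternative is not fiddly.} The splitting argument you set aside is in fact the shortest complete proof. Let $\phi$ be nondecreasing from $0$ to $\phi_{\mathrm{tot}}\ge\theta>2\beta$, and cut at the first time $\phi=\beta$ and the last time $\phi=\phi_{\mathrm{tot}}-\beta$.
\begin{enumerate}
\item Each end piece lies in a sector of opening $\beta\le\pi/2$. It joins a point with $\rho=1$ to a point $y$ on the other bounding ray with $|y|=s\ge r$. By the law of cosines its length is at least $\sqrt{(s-r)^2+1-r^2}\ge\sqrt{1-r^2}$.
\item The middle piece has length at least $\int\rho\,d\phi\ge r(\phi_{\mathrm{tot}}-2\beta)$.
\end{enumerate}
Summing gives $\ell_{\min}$ with no calibration at all.

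A minor slip: for $\theta=\pi$ you mean any $2$-plane containing $u_0$, not ``any line through $O$''.
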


\begin{proof}
First, we show that the minimum is attained by a curve contained in the plane spanned by \(u_0\) and \(u_1\). 
Let \(\tilde u(\varphi)\) be the unit vector in the plane spanned by \(u_0\) and \(u_1\) such that
\[
\angle(u_0,\tilde u(\varphi))=\varphi,
\qquad
\varphi\in[0,\theta].
\]
Let \(x=x(t)\in\mathbb R^{d-1}\) be a curve avoiding the ball of radius \(r\) centered at the origin. We write
\[
x(t)=\rho(t)u(t),
\]
where \(\rho(t)\ge r\) and \(|u(t)|=1\). Reparametrizing the curve if necessary, we may assume that \(u(t)\) is parametrized by spherical arc length, that is,
\[
s=\int_0^t |\dot u(\tau)|\,d\tau,
\qquad t\in[0,1].
\]
Thus, $x(s)=\rho(s)u(s)$, $s\in[0,\ell]$, where $\rho(s)\ge r$, $u(s)\in \mathbb{R}^{d-1}$, $|u(s)|=1$ and $\displaystyle \ell:=\int_0^1 |\dot{u}|\,dt$. Consider the following planar curve
\[
\tilde{x}(s):= \rho(s)\tilde{u}\left(\frac{\theta}{\ell} s\right).
\]
It also avoids the ball $\{x:|x|<r\}$ and lies in the plane spanned by $u_0,u_1$.
We have
\[
d\tilde{x} =d \rho \tilde{u}+\rho \frac{\theta}{\ell}d\tilde{u},\quad dx =d \rho u+\rho du.
\]
Since $|u|=|\tilde{u}|=1$ and $u\perp du$, $\tilde{u}\perp d \tilde{u}$, $|u'(s)|=1$, $|\tilde{u}'(\varphi)|=1$, we get
\[
|d\tilde{x}|^2=d\rho^2|\tilde{u}|^2+\rho^2 \frac{\theta^2}{\ell^2}|d\tilde{u}|^2=d\rho^2+\rho^2\frac{\theta^2}{\ell^2}\,|ds|^2
\]
\[
|dx|^2=d\rho^2+\rho^2\,|ds|^2
\]
Since $\displaystyle \theta/\ell\le 1$, this means $|d\tilde{x}|\le |dx|$. Therefore
\[
\int_0^{\ell}|\tilde{x}'(s)|\,ds\le \int_0^{\ell}|x'(s)|\,ds 
\]

Thus, we have replaced the curve $x(t)$ by a planar one $\tilde{x}(t)$ of no greater length. Therefore, it suffices to consider the planar problem. The endpoints lie on the unit circle and have angular separation \(\theta\), while the curve is constrained to remain outside the disk of radius \(r\) centered at the origin.

If the chord joining \(u_0\) and \(u_1\) does not intersect the open disk \(|x|<r\),
then the chord is admissible and is the shortest path. The distance from the origin
to this chord is \(\cos(\theta/2)\). Hence this case occurs precisely when $\cos(\theta/2)\ge r$, or equivalently,
\[
\theta\le 2\arccos r.
\]
The minimum length is then
\[
|u_0-u_1|=2\sin\frac{\theta}{2}.
\]
Assume now that $\theta>2\arccos r$.
Then the chord intersects the forbidden disk. The shortest admissible curve consists
of a tangent segment from \(u_0\) to the circle \(|x|=r\), followed by the shorter
arc of this circle, followed by a tangent segment from the circle \(|x|=r\) to \(u_1\).

Each tangent segment has length $\sqrt{1-r^2}$. If \(\alpha\) is the angle between \(u_0\) and the radius to the tangent point, then $\cos\alpha=r$, and hence $\alpha=\arccos r$. Therefore the central angle of the circular arc is
\[
\theta-2\alpha=\theta-2\arccos r. 
\]
The length of the circular arc is $r\bigl(\theta-2\arccos r\bigr)$. Thus in this case the minimum length is
\[
2\sqrt{1-r^2}+r\bigl(\theta-2\arccos r\bigr).
\]

The two expressions coincide when \(\theta=2\arccos r\), so the proof is complete.
\end{proof}

\medskip
Now let
\[
z_m:=\max_{t\in[0,1]} |u_z(t)|.
\]
Clearly \(0\le z_m\le 1\). For every \(t\in[0,1]\),
\[
|u_P(t)|=\sqrt{1-u_z(t)^2}\ge \sqrt{1-z_m^2}.
\]
Applying Lemma~\ref{lem:min_path} with $r=\sqrt{1-z_m^2}$, we obtain $\displaystyle \int_0^1 |\dot u_P(t)|\,dt\ge \ell(z_m)$, where
\[
\ell(z_m)=
\begin{cases}
2\sin\dfrac{\theta}{2},
&
\theta\le 2\arcsin z_m,
\\[8pt]
2z_m+\sqrt{1-z_m^2}\bigl(\theta-2\arcsin z_m\bigr),
&
\theta\ge 2\arcsin z_m.
\end{cases}
\]
Indeed, this follows from Lemma~\ref{lem:min_path} because
\[
2\arccos\sqrt{1-z_m^2}=2\arcsin z_m.
\]

On the other hand
\[
\int_0^1\bigl(|\dot A_z(t)|+|\dot B_z(t)|\bigr)\,dt\ge 2\max\{h,z_m\}.
\]
Consequently,
\[
L(A_0B_0,A_1B_1)
\ge
\sqrt{4\left(\max\{h,z_m\}\right)^2+\ell(z_m)^2}.
\]

We shall also use a second estimate. Since \(u(t)\) is a curve on the unit sphere joining \(u(0)\) to \(u(1)\), and since it reaches height \(z_m\) above or below the equatorial hyperplane \(P\), its length is at least the length of the shortest spherical curve joining two equatorial points at angular distance \(\theta\) and reaching height \(z_m\). Therefore
\[
\int_0^1 |\dot u(t)|\,dt
\ge
2\arccos\left(\sqrt{1-z_m^2}\cos\frac{\theta}{2}\right).
\]
Since
\[
L(A_0B_0,A_1B_1)\ge \int_0^1 |\dot u(t)|\,dt,
\]
we obtain
\[
L(A_0B_0,A_1B_1)
\ge
2\arccos\left(\sqrt{1-z_m^2}\cos\frac{\theta}{2}\right).
\]

Thus
\[
L(A_0B_0,A_1B_1)
\ge
\max\left\{
\sqrt{4\left(\max\{h,z_m\}\right)^2+\ell(z_m)^2},
\,
2\arccos\left(\sqrt{1-z_m^2}\cos\frac{\theta}{2}\right)
\right\}.
\] 
Since $0\le z_m\le 1$, the last lower bound is at least $F(\theta, h)$, hence
\[
L(A_0B_0,A_1B_1)
\ge
F(\theta,h).
\]
\qed

\smallskip
\noindent \textbf{Remark.} (compare this with the remark after Theorem~\ref{thm:theorem_2}). It is not difficult to see that the function $F(\theta, h)$ is non-decreasing in both variables $\theta \in[0,\pi]$ and $h\ge 0$. Moreover, for any fixed $\theta\in (-\pi,\pi)$, the function $F(\theta, h)$ is strictly increasing in $h\ge 0$ and for any fixed $h\ge 0$, the function $F(\theta,h)$ is strictly increasing in $\theta\in[0,\pi]$. 

Note also that $F(\theta,0)=|\theta|$, $F(0,h)=2h$, and $F(\pi,h)=\max\{\pi, 2\sqrt{h^2+1}\}$.


\bigskip
\begin{thebibliography}{}

\bibitem{Bo_Roth_1990} O. Bottema and B. Roth,
Theoretical Kinematics,
Dover, 1990.

\bibitem{Dubovitskii1976}
A.~Ya.~Dubovitskii,
\newblock Solution of S.~Ulam's Problem on the Optimal Matching of Segments,
\newblock {\em Soviet Mathematics Doklady},
16:1373--1376, 1976.
Translated from
{\em Doklady Akademii Nauk SSSR},
236(1):17--20, 1977.

\bibitem{IckingRoteWelzlYap1993}
C.~Icking, G.~Rote, E.~Welzl, and C.-K.~Yap,
\newblock Shortest Paths for Line Segments,
\newblock {\em Algorithmica}, 10(2--4):182--200, 1993.
\newblock doi:10.1007/BF01891839.

\bibitem{Gurevich1977}
A.~B.~Gurevich,
\newblock On the Most Economical Motion of a Segment in the Plane,
\newblock {\em Mathematical Notes},
21(2):150--152, 1977.
Translated from
{\em Matematicheskie Zametki},
21(2):265--269, 1977.

\bibitem{Ulam1960}
S.~M.~Ulam,
\newblock {\em A Collection of Mathematical Problems},
\newblock Interscience Publishers, New York, 1960.
(Reprinted as {\em Problems of Modern Mathematics},
Science Editions, New York, 1964.)

\end{thebibliography}
\end{document}